\newtheorem{theorem}{Theorem}[section]
\newtheorem{lemma}[theorem]{Lemma}
\theoremstyle{definition}
\newtheorem{remark}{Remark}
\newcommand\R{\mathbb{R}}
\newcommand\Z{\mathbb{Z}}
\newcommand\C{\mathbb{C}}
\title[Universality of Euler equation] 
      {On the universality of the incompressible Euler equation on compact manifolds}
\author[Terence Tao]{}
\subjclass{Primary: 35Q35, 37N10, 76B99.}
 \keywords{Incompressible Euler equation, universality, ODE, Riemannian manifolds, quasiperiodicity.}
 \email{tao@math.ucla.edu}
\begin{document}
\maketitle

\centerline{\scshape Terence Tao}
\medskip
{\footnotesize
 \centerline{Dept of Mathematics UCLA}
   \centerline{405 Hilgard Ave}
   \centerline{Los Angeles, CA 90095}
} 

\bigskip

 \centerline{(Communicated by the associate editor name)}

\begin{abstract}
The incompressible Euler equations on a compact Riemannian manifold $(M,g)$ take the form
\begin{align*}
\partial_t u + \nabla_u u &= - \mathrm{grad}_g p \\
\mathrm{div}_g u &= 0.
\end{align*}
We show that any quadratic ODE $\partial_t y = B(y,y)$, where $B \colon \R^n \times \R^n \to \R^n$ is a symmetric bilinear map, can be linearly embedded into the incompressible Euler equations for some manifold $M$ if and only if $B$ obeys the cancellation 
condition $\langle B(y,y), y \rangle = 0$ for some positive definite inner product $\langle,\rangle$ on $\R^n$.  This allows one to construct explicit solutions to the Euler equations with various dynamical features, such as quasiperiodic solutions, or solutions that transition from one steady state to another, and provides evidence for the ``Turing universality'' of such Euler flows.
\end{abstract}

\section{Introduction}

Let $(M,g)$ be a compact connected smooth orientable Riemannian manifold without boundary (which we henceforth abbreviate as \emph{compact Riemannian manifold}).  The incompressible Euler equations on $M$ take the form
\begin{equation}\label{euler}
\begin{split}
\partial_t u + \nabla_u u &= - \mathrm{grad}_g p \\
\mathrm{div}_g u &= 0
\end{split}
\end{equation}
where for each time $t$, $u(t) \in \Gamma(TM)$ is a smooth vector field on $M$ (the \emph{velocity field}), $p \in C^\infty(M)$ is a scalar field (the \emph{pressure}), $\mathrm{grad}_g$ is the gradient with respect to the metric $g$, $\mathrm{div}_g$ is the divergence with respect to $g$ (or the volume form associated with $g$), and $\nabla$ is the Levi-Civita connection.  These equations may be interpreted as geodesic flow on the infinite-dimensional manifold of volume-preserving diffeomorphisms of $M$; see \cite{ebin}.  The analysis in \cite{ebin} also covers the case when $M$ is non-orientable, non-compact, or contains a boundary; but for this paper we will restrict attention to the case of compact Riemannian manifolds for simplicity.  We will also only consider classical (i.e., smooth) solutions to \eqref{euler} in this paper, in particular there will be no discussion of weak solutions.

Formally, one can use Hodge theory to eliminate the modified pressure term from the Euler equations, and rewrite \eqref{euler} as 
\begin{equation}\label{euler-2} 
\partial_t u = B_E(u,u)
\end{equation}
where the symmetric bilinear operator $B_E(u_1, u_2)$ is defined as the orthogonal projection of $- \frac{1}{2} ( \nabla_{u_1} u_2 + \nabla_{u_2} u_1 )$ onto divergence-free vector fields.  A similar use of Hodge theory can be used to solve for $p$ (up to constants) as a quadratic function of $u$.

In \cite{tao-navier}, the author investigated modified Euler equations of the form 
\begin{equation}\label{mode}
 \partial_t u = \tilde B_E(u,u)
\end{equation}
where $\tilde B_E$ was an ``averaged'' version of $B_E$, as well as the analogous modification of the Navier-Stokes equations.  By carefully choosing the averaged operator $\tilde B_E$, one was able to embed inside \eqref{mode} some carefully chosen system of ordinary differential equations (ODE) of the form\footnote{See also \cite{brom}, \cite{dick}, \cite{kap} for some prior literature on the evolution of equations of this type.  We thank Joseph Malkoun for these references, and for sharing some unpublished work on equations of the form \eqref{yby} that obey variants of the condition \eqref{yo}.}
\begin{equation}\label{yby}
 \partial_t y = B(y,y) 
\end{equation}
where $y$ now takes values in a finite-dimensional vector space $\R^n$, and $B \colon \R^n \times \R^n \to \R^n$ is a bilinear map, which we can take without loss of generality to be symmetric (otherwise we can just replace $B$ with its symmetrisation $(y,z) \mapsto \frac{B(y,z) + B(z,y)}{2}$), and which obeyed a conservation law
\begin{equation}\label{yo}
 \langle B(y,y), y \rangle = 0
\end{equation}
for all $y \in \R^n$ and some positive definite inner product $\langle,\rangle \colon \R^n \times \R^n \to \R$, which in particular gives rise to the energy conservation law
$$ \partial_t \langle y, y \rangle = 0.$$
In particular, as the level sets of $\langle y, y\rangle$ are compact, this implies that solutions to \eqref{yby} exist globally in time.
On the other hand, an infinite-dimensional version of \eqref{yby} was constructed in \cite{tao-navier} that exhibited finite time blowup even in the presence of some dissipation, which was then used to establish finite time blowup for an averaged version of the Navier-Stokes equations.

Based on this, one may ask the question of whether ODE of the form \eqref{yby} may be embedded into the true Euler equations \eqref{euler}, (or \eqref{euler-2}), as opposed to the artificially modified Euler equations \eqref{mode}.  More formally, we say that a bilinear symmetric map $B \colon \R^n \times \R^n \to \R^n$ can be \emph{embedded into the Euler equations} for a compact Riemannian manifold $(M,g)$ if there exists an injective linear map $U \colon \R^n \to \Gamma(TM)$, and a map $P \colon \R^n \to C^\infty(M)$ into the spaces $\Gamma(TM)$, $C^\infty(M)$ of smooth vector fields and smooth scalar fields on $M$ respectively, such that whenever $t \mapsto y(t)$ is a solution to the ODE \eqref{yby} on some time interval $I$, the fields $u \colon t \mapsto U(y(t))$ and $p \colon t \mapsto P(y(t))$ solve the Euler equations \eqref{euler} on the same time interval $I$.  Equivalently (by the Picard existence theorem), we have the equations
\begin{equation}\label{euler-embed}
\begin{split}
U(B(y,y)) + \nabla_{U(y)} U(y) &= - \mathrm{grad}_g P(y) \\
\mathrm{div}_g U(y) &= 0
\end{split}
\end{equation}
for all $y \in \R^n$; the image $U(\R^n)$ of $U$ is then a finite-dimensional invariant subspace of the state space for the flow \eqref{euler-2}.  Informally, if $B$ can be embedded into the Euler equations, then we may accurately simulate the dynamics of the system \eqref{yby} by using an incompressible fluid on a suitable compact Riemannian manifold.

The main result of this paper is to give a complete answer to this question, if one is given the freedom to choose the manifold $M$:

\begin{theorem}[Embeddability criterion]\label{main}  Let $B \colon \R^n \times \R^n \to \R^n$ be a symmetric bilinear map.  Then the following are equivalent:
\begin{itemize}
\item[(i)]  $B$ can be embedded into the Euler equations for some Riemannian manifold $(M,g)$.
\item[(ii)]  There exists a positive definite inner product $\langle, \rangle \colon \R^n \times \R^n \to \R$ on $\R^n$ such that one has the cancellation \eqref{yo} for all $y \in \R^n$.  
\end{itemize}
\end{theorem}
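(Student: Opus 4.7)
The forward direction (i) $\Rightarrow$ (ii) should be immediate from the $L^2$ energy conservation of the Euler equations. Given an embedding $U \colon \R^n \to \Gamma(TM)$, I would pull back the $L^2$ inner product $(y_1, y_2) \mapsto \int_M g(U(y_1), U(y_2))\, d\mu_g$ through $U$; this is positive definite on $\R^n$ by injectivity of $U$, and the conservation of $\|u\|_{L^2}^2$ by \eqref{euler} translates, via \eqref{euler-embed} and the Picard equivalence, into $\langle B(y,y), y \rangle = 0$ for all $y \in \R^n$.

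For the substantive reverse implication (ii) $\Rightarrow$ (i), the task is to produce $(M,g)$, divergence-free vector fields $X_i := U(e_i)$ on $M$, and pressures $p_{ij} \in C^\infty(M)$ so that \eqref{euler-embed} holds. After choosing an orthonormal basis of $(\R^n, \langle, \rangle)$, this reduces to finding linearly independent divergence-free $X_1,\ldots,X_n$ whose Euler bilinear form $B_E(X_i,X_j)$ stays in the span of the $X_l$ with coefficients equal to the structure constants $c_{ij}^l := \langle B(e_i,e_j), e_l\rangle$. Polarising \eqref{yo} twice gives the cyclic identity $c_{ijl} + c_{jli} + c_{lij} = 0$ together with the symmetry $c_{ijl} = c_{jil}$, which is the algebraic consistency condition that any construction must match.

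My plan for the construction is to work on a high-dimensional flat torus $\mathbb{T}^N$ and take each $X_i$ to be a finite superposition of polarised Fourier modes of the form $a_{i,\alpha} e^{2\pi i k_{i,\alpha}\cdot x} + \mathrm{c.c.}$ with $k_{i,\alpha} \cdot a_{i,\alpha} = 0$; divergence-freeness is then automatic, and $B_E(X_i, X_j)$ becomes a sum of three-wave interactions at output frequencies of the form $k_{i,\alpha} \pm k_{j,\beta}$. By picking the $k_{i,\alpha}$ to be generic integer vectors subject only to the precise resonances $k_{i,\alpha} + k_{j,\beta} = k_{l,\gamma}$ needed to realise each nonzero structure constant, and then tuning the polarisation vectors $a_{i,\alpha}$, one tries to force $B_E$ to close on $\mathrm{span}(X_1,\ldots,X_n)$; the cyclic symmetry guaranteed by the cancellation condition is exactly what should allow the resulting algebraic system for the polarisations to be solved consistently.

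The \emph{main obstacle} I anticipate is that no single Fourier-resonance pattern on a flat torus can realise an arbitrary cancellation-preserving $B$: the trilinear forms that arise directly from triadic interactions have extra hidden algebraic structure (they are close relatives of Euler-Arnold forms for nilpotent Lie algebras), whereas the space of $B$'s satisfying \eqref{yo} is strictly larger. I would therefore expect to decompose a general $B$ into a (non-unique) sum of elementary cancellation-preserving pieces, realise each piece on its own small model manifold, and combine them via Cartesian products so that each basis element $e_i$ is represented by a vector field supported on the appropriate factors. Verifying that the combined embedding satisfies \eqref{euler-embed} and that the resulting $M$ is a connected compact Riemannian manifold is the delicate bookkeeping step I would expect to spend the most effort on.
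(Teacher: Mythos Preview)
Your (i) $\Rightarrow$ (ii) argument is correct and matches the paper's.

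For (ii) $\Rightarrow$ (i), however, your proposal has a genuine gap. Your primary construction---Fourier modes on a flat torus with engineered triadic resonances---is precisely what the paper says it \emph{cannot} do: the author remarks explicitly that ``our methods are unable to embed arbitrary ODE of the form \eqref{yby} into the Euler equations on a flat manifold such as a torus, though it would be interesting to know if this was indeed possible.'' You correctly anticipate an obstruction, but your proposed repair (decompose $B$ into unspecified ``elementary'' cancellation-preserving pieces, realise each on a small model, and take Cartesian products) is not a proof plan so much as a hope. You do not say what the elementary pieces are, why they span the space of $B$'s satisfying \eqref{yo}, or how a basis vector $e_i$ appearing in several pieces is to be represented by a single vector field $X_i$ whose self-interactions and cross-interactions across factors combine to give exactly the required $c^l_{ij}$. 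The bookkeeping you flag as ``delicate'' is in fact the entire content of the construction, and nothing in your outline indicates how it would close.

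The paper takes a completely different route that sidesteps these issues. The key idea is to \emph{not} fix the metric in advance. One first passes to a covelocity formulation in which the Euler system becomes $V(B(y,y)) + U(y)\,\invneg\, dV(y) = -dP'$, with the metric appearing only through $V(y) = g\cdot U(y)$. One then constructs $U$ and $V$ independently on a manifold $M$ (with a chosen volume form), subject only to the requirement that the Gram form $(y,y')\mapsto V(y)(U(y'))$ be pointwise symmetric positive definite; a suitable Riemannian metric $g$ satisfying $V = g\cdot U$ can then be manufactured \emph{a posteriori}. After a further ansatz, everything reduces to a transport system $F(B(y,y),z) + dF(y,z)(U(y)) = 0$, and this is solved exactly on $M = SO(n)$ using the elementary observation (Lemma~\ref{stax}) that \eqref{yo} is equivalent to $B(y,y) = S(y)y$ for some linear $S\colon \R^n \to \mathfrak{so}(n)$: one takes $U(y)$ to be the right-invariant vector field generated by $S(y)$ and $F(y,z)(Q) = \langle y, Qz\rangle$. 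The final manifold is $SO(n)\times(\R/\Z)^{n+1}$ with a non-flat, somewhat artificial metric. The freedom to choose $g$ after the fact is exactly what your flat-torus approach forfeits.
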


Thus, for instance, one cannot embed $B$ into the Euler equations for any manifold $M$ if there is a non-zero $y \in \R^n$ such that $B(y,y)$ is a non-zero scalar multiple of $y$.  Informally, Theorem \ref{main} indicates that energy conservation, as well as the bilinear nature of the nonlinearity, are the \emph{only} constraint on the dynamics of the Euler equations, at least insofar as the dynamics of finite-dimensional invariant subspaces for these equations are concerned.

The derivation from (ii) to (i) is an easy consequence of the energy conservation for the Euler equations.  Indeed, for any smooth solution $(u,p)$ to the Euler equations \eqref{euler}, a standard integration by parts reveals the energy conservation law
$$ \partial_t \langle u(t), u(t) \rangle_{L^2(M)} = 0,$$
where
$$ \langle u, v \rangle_{L^2(M)} \coloneqq \int_M g(x)( u(x), v(x) )\ d\mathrm{vol}_g(x) $$
is the usual inner product between (smooth) vector fields on $M$ using the volume form $d\mathrm{vol}_g$ associated to the Riemannian metric $g$ on the orientable manifold $M$.  In particular, if $B \colon \R^n \times \R^n \to \R^n$ is embedded into the Euler equations via smooth linear maps $U \colon \R^n \to \Gamma(TM)$, $P \colon \R^n \to C^\infty(M)$, then we have
$$ \partial_t \langle y(t), y(t) \rangle_{U} = 0$$
where $\langle, \rangle_U \colon \R^n \times \R^n \to \R$ is the bilinear form
$$ \langle y, z \rangle_U \coloneqq \langle U(y), U(z) \rangle_{L^2(M)}.$$
As $U$ is injective, $\langle, \rangle_U$ is positive definite.  From \eqref{yby} and the chain rule (and the Picard existence theorem), we conclude the cancellation law \eqref{yo} (with $\langle, \rangle$ being set to $\langle, \rangle_U$).

The derivation of (i) from (ii) is more difficult, and will occupy the bulk of this paper.  The proof proceeds by a series of reductions.  Firstly, one reduces the reliance on the metric field $g$ by rephrasing some components of the Euler equations in terms of the \emph{covelocity} $V$ instead of the velocity $U$; this is related to the well-known \emph{vorticity formulation} of the Euler equations, although the vorticity (the exterior derivative of the covelocity) will play a relatively minor role in our analysis.  Once one does this, it becomes relatively easy to eliminate the role of the metric $g$ (especially given the freedom to modify the manifold $M$), as long as a certain ``Gram bilinear form'' relating $U$ and $V$ is positive definite.  By selecting a certain ansatz for $U$ and $V$ in terms of a system of scalar fields $F$, one can reduce the Euler equations to a system of transport equations.  This system is in general overdetermined, which makes it unlikely that one can find large numbers of solutions to the system; however, it turns out that there is a highly symmetric explicit solution, based on the Lie group structure of the orthogonal group $SO(n)$, that avoids the overdeterminacy.  (As a consequence, the manifold $M$ used to model the equation \eqref{yby} can be explicitly given as $SO(n) \times (\R/\Z)^{n+1}$, although the metric $g$ one places on $M$ is somewhat artificial.)

The implication of (i) from (ii) allows one to construct quite explicit solutions to the Euler equations with some interesting dynamical behavior.  For instance, for any $\alpha \in \R$, the system of ODE
\begin{equation}\label{moe}
\begin{split}
\partial_t y_1 &= \alpha y_2 y_3 \\
\partial_t y_2 &= - \alpha y_1 y_3 \\
\partial_t y_3 &= 0
\end{split}
\end{equation}
(which is referred to as the ``rotor gate'' in \cite{tao-navier}) is of the form \eqref{yby} with a bilinear form obeying \eqref{yo} (with the Euclidean inner product on $\R^3$), and admits the explicit periodic solutions
\begin{align*}
y_1(t) &= A \sin(\alpha \omega t + \theta) \\
y_2(t) &= A \cos(\alpha \omega t + \theta) \\
y_3(t) &= \omega
\end{align*}
for any $A, \omega, \theta \in \R$.  Applying Theorem \ref{main}, we conclude the existence of a compact Riemannian manifold $(M,g)$ (in fact one can take $M = SO(3) \times (\R/\Z)^4$) and (explicitly constructible) linearly independent divergence-free vector fields $u_1, u_2, u_3 \in \Gamma(TM)$, such that for any $A,\omega,\theta \in \R$, one has periodic solutions to the Euler equations \eqref{euler} on $(M,g)$ with velocity field
$$ u(t) = A \sin(\alpha \omega t + \theta) u_1 + A \cos(\alpha \omega t + \theta) u_2 + \omega u_3,$$
as well as a pressure field $p(t)$ that can be explicitly computed, though we will not do so here.  By taking tensor powers of \eqref{moe}, one can similarly construct a compact Riemannian manifold that admits quasiperiodic solutions; we leave the details to the interested reader.

In a similar vein, for any $\alpha \in \R$, the system of ODE
\begin{align*}
\partial_t y_1 &= -\alpha y_1 y_2 \\
\partial_t y_2 &= \alpha y_1^2 
\end{align*}
(referred to as the ``pump gate'' in \cite{tao-navier}) also is of the form \eqref{yby} obeying \eqref{yo}, and has the explicit solutions
\begin{align*}
y_1(t) &= A \operatorname{sech}(A \alpha t)\\ 
y_2(t) &= A \operatorname{tanh}(A \alpha t) 
\end{align*}
for any $A \in \R$, which converges to the steady state $(y_1,y_2) = (0,+A)$ as $t \to +\infty$ and $(y_1,y_2) = (0,-A)$ as $t \to -\infty$.  Applying Theorem \ref{main}, we conclude the existence of a compact Riemannian manifold $(M,g)$ (in fact one can take $M = SO(2) \times (\R/\Z)^3 \equiv (\R/\Z)^4$) and linearly independent divergence-free vector fields $u_1,u_2$, such that for any $A \in \R$, one has solutions to \eqref{euler} on $(M,g)$ with velocity field
$$ u(t) = A \operatorname{sech}(A \alpha t) u_1 + A \operatorname{tanh}(A \alpha t) u_2$$
which approach the steady state $u_2$ as $t \to +\infty$ and $u_1$ as $t \to -\infty$.  Similarly for the ``amplifier gate''
\begin{align*}
\partial_t y_1 &= -\alpha y_2^2 \\
\partial_t y_2 &= \alpha y_1 y_2
\end{align*}
that is also discussed in \cite{tao-navier}.

By coupling together a finite number of such gates, one can create (finite-dimensional fragments of) inviscid \emph{shell models}; see e.g. \cite{shell1}, \cite{shell2}, \cite{shell3}, \cite{shell4}, \cite{ds}, \cite{kp}, \cite{fp} for some examples of such models.  Theorem \ref{main} then allows us to embed any such finite-dimensional fragment of a shell model into the Euler flow of a compact manifold, although as before the dimension of the manifold will depend on the dimension of the fragment.  Informally, this provides some heuristic support for the use of such systems as simplified toy models for the study of Euler equations.

In \cite{tao-navier}, a carefully chosen coupling of such gates was used to create a system that performed a delayed, but abrupt, transfer of energy from one mode to another; again, one can use Theorem \ref{main} to then recreate the same dynamics in the Euler equations on some compact manifold.  Unfortunately, the infinite-dimensional ODE used to create finite time blowup in \cite{tao-navier} lies outside of the range of applicability of Theorem \ref{main} (note in particular that the dimension of the spatial manifold $M$ constructed in Theorem \ref{main} will depend on the dimension $n$ of the ODE).  Nevertheless, it raises the distinct possibility that one can somehow adapt the methods in \cite{tao-navier} to demonstrate finite-time blowup for the true Euler equations (as opposed to an artificially averaged Euler equation) on some finite (but high) dimensional Riemannian manifold $(M,g)$.  One possible step in this direction would be to construct a quadratic ODE \eqref{yby} (obeying \eqref{yo}) which exhibited ``Turing universality'' in the spirit of \cite[Corollary 1.11]{tao-univ}.  This appears to be somewhat challenging, due to the fact that the ODE \eqref{yby} behaves like an ``analog'' computer rather than a ``digital'' one; on the other hand, a primitive example of an ``analog-to-digital converter'' in this setting was used in \cite{tao-navier}, so the author does not view the possibility of constructing a Turing universal quadratic ODE to be totally out of the question.

Somewhat amusingly, Theorem \ref{main} also allows one to embed the Euler equations 
\begin{align*}
I_1 \partial_t \omega_1 + (I_3 - I_2) \omega_2 \omega_3 &= 0 \\
I_2 \partial_t \omega_2 + (I_1 - I_3) \omega_3 \omega_1 &= 0 \\
I_3 \partial_t \omega_3 + (I_2 - I_1) \omega_1 \omega_2 &= 0 
\end{align*}
for the free motion of a three-dimensional rigid body with moments of inertia $I_1, I_2, I_3 > 0$, into the Euler equations for incompressible fluid flow on some Riemannian manifold $(M,g)$; the inner product in this case is associated to the total kinetic energy $\frac{1}{2} I_1 \omega_1^2 + \frac{1}{2} I_2 \omega_2^2 + \frac{1}{3} I_3^2$.  The two Euler equations were previously observed to be analogous in \cite{arnold}, as both could (formally, at least) be viewed as geodesic flow on a Lie group.  In particular, instability effects such as those arising from the ``tennis racket theorem'' \cite{tennis} may be seen in the equations of incompressible fluid flow on $(M,g)$.

We stress that the Riemannian manifold $(M,g)$ produced by this theorem will depend on the choice of $B$ (and on the dimension $n$).  In particular, our methods are unable to embed arbitrary ODE of the form \eqref{yby} into the Euler equations on a flat manifold such as a torus, though it would be interesting to know if this was indeed possible.  Furthermore the manifolds $M$ used are rather high dimensional (the dimension grows quadratically in $n$); we unfortunately have nothing to say about the dynamics of three-dimensional Euler equations (where there are potentially more constraints on the dynamics, for instance due to helicity conservation in the case of flat domains).

The author is supported by NSF grant DMS-1266164 and by a Simons Investigator Award.  The author also thanks the commenters on his blog for some corrections.

\section{First reduction: covelocity formulation}

We begin the proof of Theorem \ref{main}. The derivation of (ii) from (i) was already established in the introduction, so we focus on the derivation of (i) from (ii).  We begin with an easy reduction: by a linear change of variable (using an orthonormal basis associated to the positive definite inner product $\langle, \rangle$), we may assume without loss of generality that $\langle, \rangle$ is the Euclidean inner product $\langle, \rangle_{\R^n}$ on $\R^n$.  It will now suffice to find a compact Riemannian manifold $(M,g)$, an injective linear map $U \colon \R^n \to \Gamma(TM)$ to the space of vector fields on $M$, and a symmetric bilinear map $P \colon \R^n \times \R^m \to C^\infty(M)$ to the scalar fields of $M$, which solve the system of equations
\begin{equation}\label{euler-coef}
\begin{split}
U(B(y,y)) + \nabla_{U(y)} U(y) &= - \mathrm{grad}_g P(y,y) \\
\mathrm{div}_g U(y) &= 0
\end{split}
\end{equation}
on $M$ for all $y \in \R^n$.

The next reduction involves the introduction of the \emph{covelocity field} $V \colon \R^n \to \Gamma(T^* M)$, defined as the dual $1$-forms to the vector fields $U$ with respect to the metric $g$, thus
$$ g( U(y), X ) = V(y)( X )$$
for any vector field $X \in \Gamma(TM)$ and $y \in \R^n$.  We abbreviate this as
$$ V(y) \coloneqq g \cdot U(y);$$
in Penrose abstract index notation (using Greek indices $\alpha,\beta,\gamma$ for the abstract coordinates on $M$), this would be
$$ V(y)_\alpha = g_{\alpha \beta} U(y)^\beta.$$
Using $g^{\alpha \beta}$ to denote the inverse $g^{-1}$ of the metric $g = g_{\alpha \beta}$, we then have
$$ U(y)^\beta = g^{\alpha \beta} V(y)_{\alpha}$$
which we abbreviate as
$$ U(y) = g^{-1} \cdot V(y).$$
Clearly, the map $U$ will be injective if and only if $V$ is.

It is also convenient to introduce the \emph{vorticity field} $dV \colon \R^n \to  \Gamma(\bigwedge^2 T^* M)$, which are the $2$-forms generated by applying an exterior derivative to the covelocity fields $V$.
In Penrose abstract index notation, this is
$$ dV(y)_{\alpha \beta} = \partial_\alpha V(y)_\beta - \partial_\beta V(y)_\alpha.$$
Using the Levi-Civita connection $\nabla$, we can also write
\begin{equation}\label{oma}
 dV(y)_{\alpha \beta} = \nabla_\alpha V(y)_\beta - \nabla_\beta V(y)_\alpha.
\end{equation}
Now we consider the $1$-forms
$$ U(y) \invneg dV(y) $$
for $y \in \R^n$, formed by contracting the $2$-form $dV(y)$ by the vector field $U(y)$. In Penrose abstract index notation, we have
$$ (U(y) \invneg dV(y))_\beta = U(y)^\alpha dV(y)_{\alpha \beta},$$
which by \eqref{oma} is equal to 
$$ U(y)^\alpha \nabla_\alpha V(y)_\beta - U(y)^\alpha \nabla_\beta V(y)_\alpha.$$
The first term can be rewritten as $(\nabla_{U(y)} V(y))_\beta$.  Since the metric $g$ is parallel to the Levi-Civita connection, and $U(y)^\alpha V(y)_\alpha = g(U(y),U(y))$, we also see from the product rule that
$$ U(y)^\alpha \nabla_\beta V(y)_\alpha =\frac{1}{2} \partial_\beta g(U(y), U(y))$$
and so we conclude the $1$-form identity
\begin{equation}\label{dgub}
 U(y) \invneg dV(y) = \nabla_{U(y)} V(y) -\frac{1}{2} d g(U(y), U(y))
\end{equation}
for all $y \in \R^n$.  On the other hand, by applying the metric $g$ to the first equation of \eqref{euler-coef} to convert vectors to $1$-forms, and recalling that the metric $g$ is parallel to the Levi-Civita connection, we see that this equation is equivalent to
$$
V(B(y,y)) + \nabla_{U(y)} V(y) = - d P(y,y).$$
Using \eqref{dgub}, we can rewrite this as
$$
V(B(y,y)) + U(y) \invneg dV(y) = - d P'(y,y)$$
where $P' \colon \R^n \times \R^n \to C^\infty(M)$ is the modified pressure field
$$P'(y,y') \coloneqq P(y,y') + \frac{1}{2} g(U(y),U(y')).$$
Clearly one can reconstruct the pressure field $P$ from the modified pressure $P'$ and from $g, U$ by the formula
$$ P(y,y') = P'(y,y') - \frac{1}{2} g(U(y),U(y')).$$
We have thus reduced Theorem \ref{main} to the following statement.

\begin{theorem}[First reduction]\label{first}  Let $B \colon \R^n \times \R^n \to \R^n$ be a symmetric bilinear map obeying \eqref{yo}.  Then there exists a compact Riemannian manifold $(M,g)$, an injective linear map $V \colon \R^n \to \Gamma(T^* M)$, a linear map $U \colon \R^n \to \Gamma(TM)$, and a symmetric bilinear map $P' \colon \R^n \times \R^n \to \C^\infty(M)$ obeying the equations
\begin{align}
V(B(y,y)) + U(y) \invneg dV(y) &= - d P'(y,y) \label{f1}\\
V(y) &= g \cdot U(y) \label{f2}\\
\mathrm{div}_g U(y) &= 0 \label{f4}
\end{align}
on $M$ for all $y \in \R^n$.
\end{theorem}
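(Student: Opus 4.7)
My plan is to pass through one further reduction that decouples the metric $g$ from the remaining data, then construct an explicit ansatz on $M = SO(n) \times (\R/\Z)^{n+1}$ that reduces \eqref{f1} to an algebraic identity whose sole obstruction is \eqref{yo}.

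\emph{Decoupling the metric.} Equation \eqref{f2} forces the pointwise pairing $(y,z) \mapsto V(y)(x)\bigl(U(z)(x)\bigr)$ to be symmetric in $(y,z)$ and positive semidefinite at each $x \in M$; conversely, given $(U,V)$ such that this pairing is symmetric and non-degenerate on $\mathrm{span}\{U(y)(x) : y \in \R^n\}$, one can define $g$ pointwise to realise \eqref{f2} on that span and extend positive-definitely to the complement. Since \eqref{f4} depends on $g$ only through the volume form, one has the freedom to arrange the extension to realise a prescribed translation-invariant volume form on a product manifold. This reduces Theorem \ref{first} to producing $(U,V,P')$ on such a manifold satisfying \eqref{f1}, with the Gram pairing symmetric and positive-definite on the image of $U$, and $U(y)$ divergence-free against a fixed volume form.

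\emph{Symmetric ansatz.} Take $M = SO(n) \times T^{n+1}$ (with $T = \R/\Z$) and coordinates $(R, \theta_0,\ldots,\theta_n)$, equipped with the product of Haar measure and Lebesgue measure. The defining representation of $SO(n)$ on $\R^n$ supplies scalar functions $(Ry)_i$ on $SO(n)$ linear in $y$. I would try an ansatz of the schematic form
\begin{align*}
V(y) &= \sum_{i=1}^n (Ry)_i\, \alpha_i(\theta) + \gamma(y)(\theta), \\
U(y) &= \sum_{i=1}^n (Ry)_i\, X_i(\theta),
\end{align*}
with explicit $\alpha_i, X_i$ built from trigonometric combinations of $d\theta_j, \partial_{\theta_j}$ chosen so that the Gram pairing averages to the Euclidean inner product, and $\gamma(y)$ a linear-in-$y$ auxiliary $1$-form along $d\theta_0$ providing room to absorb $V(B(y,y))$.

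\emph{Collapse to \eqref{yo} and main obstacle.} With such an ansatz, all the geometric operations in \eqref{f1} become algebraic: derivatives along $SO(n)$ act on $(Ry)_i$ through the Maurer--Cartan form, while derivatives along $T^{n+1}$ act on the $\theta$-factors. The resulting identity splits into (i) transport equations on $T^{n+1}$ solvable by Fourier inversion, and (ii) a single scalar closedness condition on $\R^n$, required for $P'(y,y)$ to exist as an antiderivative. By the $SO(n)$-equivariance of the ansatz, any such closedness obstruction is a homogeneous $O(n)$-invariant cubic built from $B$ and $y$, and the only such invariant is $\langle B(y,y), y \rangle$, which vanishes by \eqref{yo}. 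Thus \eqref{yo} is precisely the condition that permits $P'$ to exist; once $(U,V,P')$ is constructed, the first step produces the metric $g$. The principal obstacle I anticipate is engineering the Fourier structure and the distribution of $d\theta_0,\ldots,d\theta_n$ across the $\alpha_i, X_i$ so that the Gram pairing is genuinely pointwise positive-definite (and not merely positive on average), the image of $U$ is $n$-dimensional, and the overdetermined closedness really does reduce to this single scalar obstruction — all of which force the ansatz to be quite rigid.
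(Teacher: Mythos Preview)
Your high-level strategy --- decouple the metric, work on $SO(n)\times(\R/\Z)^{n+1}$, and reduce \eqref{f1} to an algebraic identity whose only obstruction is \eqref{yo} --- matches the paper's. But the specific ansatz you propose is not the paper's, and as written it is missing the idea that actually makes the reduction close.

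The paper does \emph{not} take $U(y)$ to be purely along the torus with trigonometric coefficients. Instead, it first proves a short lemma: condition \eqref{yo} is equivalent to the existence of a linear map $S\colon\R^n\to\mathfrak{so}(n)$ with $B(y,y)=S(y)y$. It then takes $U(y)$ on the $SO(n)$ factor to be the right-invariant vector field associated to $S(y)$, sets $F(y,z)(Q)\coloneqq\langle y,Qz\rangle$, and defines $V(y)=\sum_i F(y,e_i)\,dt_i$ and $\tilde U(y)=U(y)+\sum_i F(y,e_i)\,\partial_{t_i}$ on $SO(n)\times(\R/\Z)^n$. The entire content of \eqref{f1} then collapses to the one-line computation $dF(y,z)(U(y))(Q)=\langle y,S(y)Qz\rangle=-\langle S(y)y,Qz\rangle=-F(B(y,y),z)(Q)$, which uses only the skew-adjointness of $S(y)$. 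The Gram pairing is $\sum_i\langle y,Qe_i\rangle\langle y',Qe_i\rangle=\langle y,y'\rangle$, pointwise positive-definite for free; no Fourier averaging is needed. The extra $(\R/\Z)$ factor is only used, separately, to normalise the volume form.

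In your ansatz $U(y)=\sum_i(Ry)_i\,X_i(\theta)$ has no component along $SO(n)$, so there is no mechanism by which differentiating along $U(y)$ produces the term $V(B(y,y))$; the map $B$ never enters the transport equation. You try to recover $B$ through a closedness obstruction and an equivariance argument, but that argument is not sound: the left $SO(n)$-action on the $R$-variable does not intertwine with an $O(n)$-action on the $y$-variable in a way that forces the obstruction to be an $O(n)$-invariant cubic in $y$ alone --- the $\alpha_i$, $X_i$, and $\gamma$ you introduce are not $O(n)$-equivariant objects, so the obstruction will in general depend on them and need not reduce to $\langle B(y,y),y\rangle$. The missing ingredient is precisely the factorisation $B(y,y)=S(y)y$ with $S(y)\in\mathfrak{so}(n)$, which both tells you what vector field to put on $SO(n)$ and makes \eqref{yo} the exact hypothesis used.
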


It remains to establish Theorem \ref{first}.  This will be the objective of the next four sections of the paper.

\section{Second reduction: decoupling the metric and volume form}

An inspection of the system \eqref{f1}-\eqref{f4} that one is trying to solve in Theorem \ref{first} reveals that the metric $g$ is only appearing in two places: in the equation \eqref{f2} linking the velocity field $U$ with the covelocity field $V$, and in the divergence-free condition \eqref{f4}.  However, the influence of the metric $g$ on \eqref{f4} is quite mild, since the divergence operator $\mathrm{div}_g$ only depends on $g$ through the volume form $d\mathrm{vol}_g$, as can be seen by the integration by parts identity
$$ \int_M f \mathrm{div}_g(X)\ d\mathrm{vol}_g = - \int_M df(X)\ d\mathrm{vol}_g,$$
valid for any vector field $X \in \Gamma(TM)$ and scalar field $f \in C^\infty(M)$.  Indeed, one can similarly define the divergence operator $\mathrm{div}_{\mathrm{vol}^m}$ with respect to any everywhere positive volume form $\mathrm{vol}^m \in \Gamma(\bigwedge^m T^* M)$ on $M$ (with $m$ denoting the dimension of $M$, and using the orientation of $M$ to define positivity) by the same formula:
$$ \int_M f \mathrm{div}_{\mathrm{vol}^m}(X)\ \mathrm{vol}^m = - \int_M df(X)\ \mathrm{vol}^m.$$
Equivalently, the volume form $\mathrm{vol}^m$ induces a Hodge duality relationship between $k$-vector fields and $m-k$-forms for any $0 \leq k \leq m$, and the divergence operator $\mathrm{div}_{\mathrm{vol}^m}$ is the conjugate of the exterior derivative $d$ by this Hodge dual operation.

Define the \emph{determinant} $\mathrm{det}_{\mathrm{vol}^m}(g) \in C^\infty(M)$ of a Riemannian metric $g$ with respect to a everywhere positive volume form $\mathrm{vol}^m$ to be the unique positive smooth scalar function such that
$$ d\mathrm{vol}_g= \mathrm{det}_{\mathrm{vol}^m}(g)^{1/2} \mathrm{vol}^m.$$
For instance, if $\mathrm{vol}^m$ is the Euclidean volume form on $\R^m$ and $g_{ij} = g(e_i,e_j)$ are the standard coefficients of the metric $g$, then $\mathrm{det}_{\mathrm{vol}^m}(g)$ is just the usual determinant of the $m \times m$ matrix $(g_{ij})_{1 \leq i,j \leq m}$.  One can then split the equation \eqref{f4} into two equations
\begin{align*}
\mathrm{div}_{\mathrm{vol}^m} u_a &= 0 \\
\mathrm{det}_{\mathrm{vol}^m} g &= 1
\end{align*}
involving an auxiliary volume form $\mathrm{vol}^m$.

In this section, we exploit the freedom to increase the dimension of $M$ to eliminate the determinant condition $\mathrm{det}_{\mathrm{vol}^m} g = 1$, thus decoupling the metric from the volume form.  More precisely, we deduce Theorem \ref{first} from

\begin{theorem}[Second reduction]\label{second}  Let $B \colon \R^n \times \R^n \to \R^n$ be a symmetric bilinear map obeying \eqref{yo}.  Then there exists a compact Riemannian manifold $(M,g)$ of some dimension $m$, an injective linear map $V \colon \R^n \to \Gamma(T^* M)$, a linear map $U \colon \R^n \to \Gamma(TM)$, a symmetric bilinear map $P' \colon \R^n \times \R^n \to C^\infty(M)$, and an everywhere positive volume form $\mathrm{vol}^m \in \Gamma(\bigwedge^m T^* M)$ obeying the equations
\begin{align}
V(B(y,y)) + U(y) \invneg dV(y) &= -dP'(y,y) \label{g1}\\
V(y) &= g \cdot U(y) \label{g2}\\
\mathrm{div}_{\mathrm{vol}^m} U(y) &= 0 \label{g4}
\end{align}
on $M$ for all $y \in \R^n$.
\end{theorem}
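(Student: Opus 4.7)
My plan is to prove Theorem \ref{second} by a further sequence of reductions, culminating in an explicit and highly symmetric construction on a Lie group product.

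The first step is to eliminate the metric entirely. Since $g$ enters only through the algebraic constraint \eqref{g2}, and \eqref{g4} involves only $\mathrm{vol}^m$, the metric plays no role in the differential relation \eqref{g1}. I would therefore formulate a further ``third reduction'' in which $g$ is dropped from the data and replaced by the pointwise condition that the Gram bilinear form $(y,z) \mapsto V(y)(U(z))$ is symmetric and positive-definite on $\R^n \times \R^n$ at every point of $M$. Given any $(U,V)$ satisfying this condition, a Riemannian metric $g$ can be recovered by defining $g$ on the span of $\{U(y)(x) : y \in \R^n\} \subset T_x M$ via the Gram form, and then extending smoothly (by partition of unity) to a positive-definite metric on all of $TM$; this restores \eqref{g2}.

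Next, I would look for an explicit construction on a manifold with abundant symmetry. Following the hint from the introduction that $SO(n)$ is central, I would take $M = SO(n) \times (\R/\Z)^{n+1}$ and parametrize $U(y)$ and $V(y)$ as linear combinations in $y$ of left-invariant vector fields and $1$-forms on the $SO(n)$ factor, with scalar coefficients that are trigonometric functions of the torus variables. The Maurer--Cartan equations then express $dV(y)$ in closed form in terms of the structure constants of $\mathfrak{so}(n)$, while the torus factor supplies ample Fourier modes in which to distribute the quadratic nonlinearity of $B$ into non-interfering channels.

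Substituting this ansatz into \eqref{g1} converts the equation into an algebraic/differential condition on the scalar coefficients. The contraction $U(y) \invneg dV(y)$ splits, via the Maurer--Cartan relation, into an $\mathfrak{so}(n)$ Lie-bracket contribution on the $SO(n)$ factor and a transport contribution along the torus. The main obstacle I anticipate is that the resulting system of conditions on the coefficients is generically \emph{overdetermined}; the decisive fact should be that the cancellation hypothesis \eqref{yo} is precisely the algebraic identity needed to make $V(B(y,y)) + U(y) \invneg dV(y)$ exact, so that it can be written as $-dP'(y,y)$ for some smooth symmetric bilinear $P'$. Once the scalar coefficients are determined by ODE integration in the torus variables, the remaining conditions are routine: since $\mathrm{vol}^m$ is decoupled from $g$, it can be freely rescaled to enforce \eqref{g4}, and linear independence of the chosen Fourier modes guarantees injectivity of $V$.
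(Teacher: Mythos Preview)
Your first reduction---dropping $g$ in favor of the pointwise positivity of the Gram form $(y,z)\mapsto V(y)(U(z))$ and then reconstructing a metric afterwards---is exactly what the paper does, and your sketch of the reconstruction is fine (the paper writes down an explicit metric $g = V(y)(U(y'))$ on the span of the $U(y)$'s, extended by $C h$ on a complement with $C$ large, but a partition-of-unity argument would work too).

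The remaining two paragraphs, however, miss the actual mechanism and contain a genuine error. The ansatz you describe (invariant forms on $SO(n)$ with trigonometric coefficients in the torus variables, with the Maurer--Cartan relations doing the work) is essentially the reverse of what succeeds. In the paper the covelocity lives \emph{entirely in the torus directions}: $V(y)=\sum_{i=1}^n F(y,e_i)\,dt_i$, with coefficients $F(y,z)(Q)=\langle y,Qz\rangle_{\R^n}$ that depend on the $SO(n)$ point and not on the torus at all. The $SO(n)$ component of $U(y)$ is the \emph{right}-invariant field $Q\mapsto S(y)Q$, where $S:\R^n\to\mathfrak{so}(n)$ is a linear map with $B(y,y)=S(y)y$; the existence of such an $S$ is the precise algebraic content of \eqref{yo} and is the key lemma you are missing. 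The verification of \eqref{g1} then has nothing to do with Maurer--Cartan structure constants or ODE integration: it is the one-line skew-adjointness computation
\[
dF(y,z)(U(y))(Q)=\langle y,S(y)Qz\rangle=-\langle S(y)y,Qz\rangle=-F(B(y,y),z)(Q).
\]

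Finally, the claim that $\mathrm{vol}^m$ ``can be freely rescaled to enforce \eqref{g4}'' is wrong. Replacing $\mathrm{vol}^m$ by $f\,\mathrm{vol}^m$ shifts each divergence by $U(y)(\log f)$, so imposing \eqref{g4} for all $y\in\R^n$ is $n$ first-order PDE for the single scalar $\log f$, which is overdetermined in general. In the paper divergence-freeness is not arranged by rescaling but is automatic: right-invariant vector fields preserve Haar measure on $SO(n)$, and the torus components of $U(y)$ have coefficients independent of the torus variables.
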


Let us now see how Theorem \ref{second} implies Theorem \ref{first}.  Let $n, B$ obey the hypotheses of Theorem \ref{first}, and let $M, g, m, V, U, P', \mathrm{vol}^m$ be the objects associated to $n, B$ by Theorem \ref{second}.  We then define the $m+1$-dimensional Riemannian manifold $(\tilde M, \tilde g)$ by setting 
$$ \tilde M \coloneqq M \times (\R/\Z)$$
with metric
\begin{equation}\label{gxy}
 \tilde g( (X,u), (Y,v) ) \coloneqq g(X, Y) + (\mathrm{det}_{\mathrm{vol}} g)^{-1} uv 
\end{equation}
at any point $(x,t) \in M \times (\R/\Z)$ of $\tilde M$, where $X,Y \in T_x M$ are tangent vectors to $M$ at $x$, and $u,v \in T_t \R/\Z \equiv \R$ are tangent vectors to $\R/\Z$ at $t$.  Clearly, $(\tilde M, \tilde g)$ is an $m+1$-dimensional Riemannian manifold with a projection map $\Pi \colon \tilde M \to M$ to $M$ defined by $\Pi(x,t) \coloneqq x$.  If we then define the pullbacks
\begin{align*}
\tilde U(y) &\coloneqq \Pi^* U(y) \\
\tilde V(y) &\coloneqq \Pi^* V(y) \\
\tilde P'(y,y') &\coloneqq \Pi^* P'(y,y')
\end{align*}
for $y, y' \in \R^n$, then $\tilde U \colon \R^n \to \Gamma(T\tilde M)$ is linear, $\tilde V \colon \R^n \to \Gamma(T^*\tilde M)$ is injective and linear, and $\tilde P' \colon \R^n  \times \R^n \to C^\infty(M)$ is symmetric and bilinear.
Similarly, if we define the $m+1$-dimensional volume form
$$ \widetilde{\mathrm{vol}}^{m+1} \coloneqq \Pi^* \mathrm{vol}^m \wedge dt $$
where $dt$ is the derivative of the second local coordinate $t \colon (x,t) \mapsto t$ on $M \times \R/\Z$, then $\widetilde{\mathrm{vol}}^{m+1}$ is a volume form.  From pulling back \eqref{g1}-\eqref{g4} (and working in coordinates if desired), we see that
\begin{align*}
\tilde V(B(y,y)) + \tilde U(y) \invneg d\tilde V(y) &= -d\tilde P'(y,y)\\ 
\tilde V(y) &= \tilde g \cdot \tilde U(y) \\
\mathrm{div}_{\widetilde{\mathrm{vol}}^{m+1}} \tilde u_a &= 0
\end{align*}
on $M$ for all $y \in \R^n$.
On the other hand, a direct computation in coordinates using \eqref{gxy} reveals that
$$ \mathrm{det}_{\widetilde{\mathrm{vol}}^{m+1}} \tilde g = (\mathrm{det}_{\mathrm{vol}^m} g) (\mathrm{det}_{\mathrm{vol}^m} g)^{-1} = 1$$
and hence $\widetilde{\mathrm{vol}}^{m+1}$ is the volume form associated to the Riemannian metric $\tilde g$.  In particular we have
$$ \mathrm{div}_{\tilde g} \tilde u_a = 0$$
and Theorem \ref{first} follows.

It remains to establish Theorem \ref{second}.  This will be the objective of the next three sections of the paper.

\section{Third reduction: eliminating the metric}

In Theorem \ref{second}, the metric $g$ now only appears in a single equation \eqref{g2}.  This equation forces the ``Gram bilinear form'' 
$$(y,y') \mapsto V(y)(U(y'))$$
from $\R^n \times \R^n \to \R$ to be symmetric and positive semi-definite, since 
$$ V(y)(U(y')) = g( U(y), U(y')) = g(U(y'), U(y)) = V(y')(U(y)) $$
and thus
$$ V(y)(U(y)) = g(U(y), U(y)) \geq 0.$$
We can reverse this implication if we assume that this matrix is in fact everywhere positive definite (not just positive semi-definite), allowing us to eliminate the role of the metric $g$.  More precisely, we deduce Theorem \ref{second} from

\begin{theorem}[Third reduction]\label{third}  Let $B \colon \R^n \times \R^n \to \R^n$ be a symmetric bilinear map obeying \eqref{yo}.  Then there exists a compact\footnote{As in the introduction, we use ``compact manifold'' as an abbreviation for ``compact connected smooth oriented manifold without boundary''.} manifold $M$ of some dimension $m$, an injective linear map $V \colon \R^n \to \Gamma(T^* M)$, a linear map $U \colon \R^n \to \Gamma(TM)$, a symmetric bilinear map $P' \colon \R^n \times \R^n \to C^\infty(M)$, and an everywhere positive volume form $\mathrm{vol}^m \in \Gamma(\bigwedge^m T^* M)$ obeying the equations
\begin{align}
V(B(y,y)) + U(y) \invneg dV(y) &= -dP'(y,y) \label{h1a}\\
\mathrm{div}_{\mathrm{vol}^m} U(y) &= 0 \label{h4a}
\end{align}
on $M$ for all $y \in M$, and such that the Gram bilinear form
\begin{equation}\label{gram}
(y,y') \mapsto V(y)(x)(U(y')(x))
\end{equation}
is symmetric and positive definite for every $x \in M$.
\end{theorem}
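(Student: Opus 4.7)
The plan is to construct $M, U, V, P', \mathrm{vol}^m$ explicitly on the product manifold $M = SO(n) \times (\R/\Z)^{n+1}$ foreshadowed in the introduction, with $U$ and $V$ built from a single equivariant scalar-field-valued map on the $SO(n)$ factor, and to exploit the Lie group structure of $SO(n)$ to defeat the overdeterminacy of the resulting system.

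I would begin with an ansatz of the form
$$V(y) = \sum_{i=0}^{n} \phi_i(F(x), y)\, dt_i, \qquad U(y) = \sum_{i=0}^{n} c_i(F(x), y)\, \partial_{t_i} + W(y),$$
where $t_0, \ldots, t_n$ are the circle-valued coordinates on $(\R/\Z)^{n+1}$, $F \colon SO(n) \to \R^N$ is an $SO(n)$-equivariant map into a suitable representation space $\R^N$, $W(y) \in \Gamma(T\,SO(n))$ is linear in $y$, and the scalar coefficients $\phi_i, c_i$ are bilinear in the pair $(F(x), y)$. Because the $dt_i$ are closed and $F$ depends only on the $SO(n)$-coordinate, $dV(y)$ and the contraction $U(y) \invneg dV(y)$ decompose cleanly into a torus-directional piece (a linear combination of the $dt_i$ with $SO(n)$-dependent coefficients) and a horizontal piece tangent to the $SO(n)$-cotangent bundle. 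Single-valuedness of $P'(y,y)$ on $M$ forces the $t$-independent scalar multiplying each $dt_i$ in \eqref{h1a} to vanish; this converts \eqref{h1a} into a system of scalar transport equations on $SO(n)$, while the residual horizontal 1-form is arranged to be exact and is absorbed into an explicit quadratic choice of $P'(y,y)$ in $F$. The polarized form $\langle B(y,y), F\rangle + 2\langle B(y,F), y\rangle = 0$ of the hypothesis \eqref{yo} is what makes the matching of these scalar equations consistent.

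The main obstacle is the resulting transport system, which after simplification takes the form $W(y) F = 2 B(y, \pi F)$ on $SO(n)$ for some projection $\pi \colon \R^N \to \R^n$, with $y \mapsto W(y)$ required to be linear. A priori this is overdetermined: the commutator $[W(y_1), W(y_2)]$ is constrained by the Lie algebra of vector fields on $SO(n)$, whereas the prescribed right-hand side is a general bilinear operator obeying only the trilinear identity \eqref{yo}. The strategy is to take $F$ to be the orbit map $g \mapsto \rho(g) v_0$ of $SO(n)$ acting on $\R^N$ through an orthogonal representation $\rho$, and $W(y) = R_{A(y)}$ to be the right-invariant vector field attached to a linear map $A\colon \R^n \to \mathrm{Lie}(SO(n))$; the PDE then reduces to the pointwise linear-algebraic identity $d\rho(A(y))\, v = 2 B(y, \pi v)$ on the orbit of $v_0$. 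The obstruction that $d\rho(A(y))$ is skew-symmetric while $v \mapsto 2 B(y, \pi v)$ is not in general must be circumvented using the $(\R/\Z)^{n+1}$ factor: the symmetric part of the right-hand side is absorbed into the torus-coefficient terms $c_i(F, y)$, and \eqref{yo} is precisely the algebraic hypothesis that makes this absorption compatible with smoothness of the output and with exactness of the residual horizontal piece. Choosing $\rho$ large enough and $v_0$ generic enough produces a suitable $A$; this representation-theoretic step is by far the most delicate part of the proof.

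Finally the remaining conditions are routine. Taking $\mathrm{vol}^m$ to be the product of bi-invariant Haar measure on $SO(n)$ with the standard volume form on $(\R/\Z)^{n+1}$, the divergence-free condition \eqref{h4a} is automatic: the torus-directional coefficients $c_i$ can be chosen $t$-independent, and right-invariant vector fields on $SO(n)$ have vanishing Haar divergence. Injectivity of $V$ follows by arranging at a single basepoint $x_0$ that the linear forms $y \mapsto \phi_i(F(x_0), y)$ span $(\R^n)^*$. Positive definiteness of the Gram bilinear form $V(y)(U(y'))$ reduces to the pointwise identity $\sum_i \phi_i(F(x), y)\, c_i(F(x), y') = \lambda(x)\, \langle y, y'\rangle_{\R^n}$ for some positive scalar $\lambda(x)$, which the design of $\phi_i, c_i$ is tailored to produce. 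The real content is in the middle paragraph; everything else is bookkeeping enabled by the flexibility granted by the $(\R/\Z)^{n+1}$ factor.
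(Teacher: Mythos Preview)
Your architecture is right: the paper does exactly what you describe, building $V(y)=\sum_i F(y,e_i)\,dt_i$ and $U(y)=\sum_i F(y,e_i)\,\partial_{t_i}+W(y)$ on $SO(n)\times(\R/\Z)^n$ (the extra circle in $(\R/\Z)^{n+1}$ enters only in the prior reduction), with $W(y)$ a right-invariant field on $SO(n)$ and $\mathrm{vol}^m$ the Haar$\times$Lebesgue form; the Gram form, divergence-freeness, and the exactness of the horizontal piece (with $P'(y,y)=\tfrac12\sum_i F(y,e_i)^2$) fall out as you say.

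The gap is in the paragraph you flag as ``by far the most delicate.'' You derive a transport equation and then propose to defeat the skew-symmetry obstruction by passing to a large orthogonal representation $\rho$, choosing a generic $v_0$, and absorbing symmetric pieces into the torus coefficients $c_i$---but you give no argument that any of this can actually be done, and your formulation $W(y)F=2B(y,\pi F)$ is already off: extracting the $dt_i$-component of \eqref{h1a} with the ansatz above gives
\[
F(B(y,y),e_i)+W(y)\,F(y,e_i)=0,
\]
which is linear in the second slot but \emph{quadratic} in $y$ through $B(y,y)$, not the bilinear-in-$(y,F)$ equation you wrote.

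What you are missing is that the ``skew-symmetry obstruction'' is no obstruction at all: the hypothesis \eqref{yo} is \emph{equivalent} to the existence of a linear map $S\colon\R^n\to\mathfrak{so}(n)$ with $B(y,y)=S(y)y$ for all $y$. Indeed, polarising \eqref{yo} gives $\langle B(y,y),z\rangle+2\langle B(y,z),y\rangle=0$, and then
\[
\langle S(y_1)y_2,y_3\rangle \coloneqq \tfrac{2}{3}\bigl(\langle B(y_1,y_2),y_3\rangle-\langle B(y_1,y_3),y_2\rangle\bigr)
\]
is skew in $(y_2,y_3)$ and satisfies $S(y)y=B(y,y)$ by a one-line check. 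With this in hand the \emph{standard} representation on $\R^n$ already works: take $F(y,z)(Q)=\langle y,Qz\rangle$ and $W(y)$ the right-invariant field with value $S(y)Q$ at $Q$; then
\[
W(y)F(y,z)(Q)=\langle y,S(y)Qz\rangle=-\langle S(y)y,Qz\rangle=-\langle B(y,y),Qz\rangle=-F(B(y,y),z)(Q),
\]
and the transport system is solved exactly. No large $\rho$, no generic $v_0$, no absorption of symmetric parts into $c_i$ (one simply takes $c_i=\phi_i$). The step you call the most delicate is a three-line lemma; once you have it, everything else in your outline goes through as written.
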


Let us now see how Theorem \ref{third} implies Theorem \ref{second}.  Let $n, B$ obey the hypotheses of Theorem \ref{second}, and let $M, m, V, U, P', \mathrm{vol}^m$ be the objects associated to $n, B$ by Theorem \ref{third}.  From the positive definiteness of \eqref{gram}, we have that
$$ V(y)(x)(U(y)(x)) > 0$$
for all $y \in \R^n$ and $x \in M$.  This implies that the maps $U \colon \R^n \to \Gamma(T M)$ and $V \colon \R^n \to \Gamma(T^* M)$ are injective, in fact their pointwise evaluations $y \mapsto U(y)(x)$ and $v \mapsto V(y)(x)$ are injective for each $x \in M$.  (In particular, this forces $n \leq m$.)

Let $h$ be an arbitrary Riemannian metric on $M$.  At each point $x$ of $M$, let $U_x \subset T_x M$ be the $n$-dimensional linear space
$$ U_x \coloneqq \{ U(y)(x): y \in \R^n\},$$
and let $U_x^\perp \subset T_x M$ be the orthogonal complement of $U_x$ in $T_x M$ with respect to the metric $h$.  Then $UM \coloneqq (U_x)_{x \in M}$ and $UM^\perp \coloneqq (U_x^\perp)_{x \in M}$ are smooth subbundles of $TM$, whose direct sum is $T M$.  Let $C>0$ be a large constant, and define the symmetric $(0,2)$-tensor $g$ on $M$ by the formula
\begin{align*}
g( U(y) + X, U(y') + Y) &\coloneqq V(y)(U(y')) + V(y)(Y) + V(y')(X) + C h(X,Y) 
\end{align*}
whenever $y,y' \in \R^n$ and $X, Y \in \Gamma(UM^\perp)$ are vector fields in $UM^\perp$.  This clearly defines a symmetric $(0,2)$ tensor, with the property that $g(U(y),X) = V(y)(X)$ for all $y \in \R^n$ and $X \in \Gamma(TM)$.  Now we claim that $g$ is positive definite (and hence a Riemannian metric).  Indeed, for any $y \in \R^n$ and $X \in \Gamma(UM^\perp)$, we have
$$
g( U(y) + X, U(y) + X) = V(y)(U(y)) + 2 V(y)(X) + C h(X,X).
$$
Since \eqref{gram} is positive definite, and $M$ is compact, we have
$$ V(y)(U(y)) \geq \delta \|y\|^2$$
for some constant $\delta>0$.  The claimed positive definiteness now follows from the Cauchy-Schwarz inequality (and the fact that the $h$ is invertible), if $C$ is chosen large enough.  From construction we have \eqref{g2} for all $y \in \R^n$, and the claim follows.

It remains to establish Theorem \ref{third}.  This will be the objective of the next two sections of the paper.  

\section{Fourth reduction: a simplifying ansatz}

We now give an ansatz for the unknown fields $V, P'$ in terms of a bilinear map $F \colon \R^n \times \R^R \to C^\infty(M)$ that simplifies the equations significantly.  More precisely, we deduce Theorem \ref{third} from

\begin{theorem}[Fourth reduction]\label{fourth}  Let $B \colon \R^n \times \R^n \to \R^n$ be a symmetric bilinear map obeying \eqref{yo}.  Then there exists a natural number $r$, a compact manifold $M$ of some dimension $m$, a linear map $U \colon \R^n \to \Gamma(TM)$, a bilinear map $F \colon \R^n \times \R^r \to C^\infty(M)$, and an everywhere positive volume form $\mathrm{vol}^m \in \Gamma(\bigwedge^m T^* M)$ obeying the equations
\begin{align}
F(B(y,y),z) + dF(y,z)(U(y)) &= 0 \label{h1} \\
\mathrm{div}_{\mathrm{vol}^m} U(y) &= 0, \label{h4}
\end{align}
on $M$ for all $y \in \R^n$ and $z \in \R^r$, and such that for every non-zero $y \in \R^n$ and any $x \in M$, the map $z \mapsto F(y,z)(x)$ is injective.
\end{theorem}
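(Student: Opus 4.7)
The strategy is to build an explicit, highly symmetric solution on the compact Lie group $SO(n)$, using its defining action on $\R^n$ to form the scalar fields $F(y,z)$. This realizes the hint in the introduction that $M$ can be taken to involve $SO(n)$.

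\textbf{The algebraic step.} The key preliminary is to convert the bilinear map $B$ into a linear map $A \colon \R^n \to T_e SO(n)$ (i.e., into antisymmetric $n \times n$ matrices) satisfying $A(y) y = -B(y, y)$. I define
\[
\langle A(y) v, w\rangle \;:=\; \tfrac{2}{3}\bigl( \langle B(y,w), v\rangle - \langle B(y,v), w\rangle \bigr).
\]
Antisymmetry of $A(y)$ in $(v,w)$ is manifest. To check $A(y) y = -B(y,y)$, polarize \eqref{yo} to get $2\langle B(y,w), y\rangle + \langle B(y,y), w\rangle = 0$; setting $v = y$ in the definition yields $\langle A(y) y, w\rangle = \tfrac{2}{3}(-\tfrac{1}{2} - 1)\langle B(y,y), w\rangle = -\langle B(y,y), w\rangle$ for every $w$. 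Conceptually, \eqref{yo} says that the totally symmetric part of the trilinear form $\langle B(\cdot,\cdot), \cdot\rangle$ vanishes, which is precisely what allows the averaging formula to produce an antisymmetric $A(y)$.

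\textbf{The group-theoretic construction.} Take $M := SO(n)$ with its bi-invariant Haar volume form $\mathrm{vol}^m$ (so $m = n(n-1)/2$); this is a compact connected orientable manifold without boundary. Define $U(y)$ to be the left-invariant vector field on $SO(n)$ with $U(y)(e) = A(y)$, so $U(y)(g) = g A(y)$ in matrix notation; linearity in $y$ is immediate. Because $SO(n)$ is compact and hence unimodular, the right-translation flow generated by $U(y)$ preserves the Haar measure, which is \eqref{h4}. Set $r := n$ and define
\[
F(y, z)(g) \;:=\; \langle z, gy\rangle,
\]
a bilinear map $\R^n \times \R^n \to C^\infty(SO(n))$. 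Then
\[
dF(y,z)(U(y))\big|_g \;=\; \tfrac{d}{dt}\Big|_{t=0} \langle z, g\,e^{tA(y)}\,y\rangle \;=\; \langle z, g A(y) y\rangle \;=\; -\langle z, g B(y,y)\rangle \;=\; -F(B(y,y), z)(g),
\]
which is \eqref{h1}. For the injectivity condition: given $y \ne 0$ and $g \in SO(n)$, the vector $gy$ is nonzero, so $z \mapsto F(y,z)(g) = \langle z, gy\rangle$ is a nonzero linear functional on $\R^n$. Equivalently, at each $x = g \in M$ the map $y \mapsto F(y, \cdot)(x) \in (\R^n)^*$ is the isomorphism $y \mapsto gy$, which provides exactly the nondegeneracy needed to build a positive-definite Gram form in Theorem \ref{third}.

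\textbf{Main obstacle.} The real work is in the algebraic step. Forcing $A(y)$ to be antisymmetric imposes $\langle A(y) y, y\rangle = 0$ identically, which in turn forces $\langle B(y,y), y\rangle = 0$; thus \eqref{yo} is not merely a hypothesis but an exact obstruction-free condition for the existence of $A$, and the averaging formula above supplies sufficiency. Once $A$ is constructed, the remaining pieces — divergence-freeness of left-invariant vector fields on a compact Lie group, the transport identity \eqref{h1} as the infinitesimal form of $A(y) y = -B(y,y)$, and injectivity from invertibility of the $SO(n)$-action — are essentially automatic. This is how the symmetry of $SO(n)$ resolves the overdeterminacy that the introduction flags as the central obstacle.
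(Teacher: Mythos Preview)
Your proof is correct and follows essentially the same approach as the paper: the paper constructs the map $S = -A$ via the identical averaging formula (Lemma~\ref{stax}), then takes $M = SO(n)$ with Haar measure, $r = n$, $F(y,z)(Q) = \langle y, Qz\rangle$, and $U(y)$ the \emph{right}-invariant field $Q \mapsto S(y)Q$. The only difference is your left/right swap (equivalently, composition with the inversion $Q \mapsto Q^{-1}$ on $SO(n)$), which is purely cosmetic.
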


Let us now see how Theorem \ref{fourth} implies Theorem \ref{third}.  Let $n, B$ obey the hypotheses of Theorem \ref{third}, and let $M, m, r, U, F, \mathrm{vol}^m$ be the objects associated to $n, B$ by Theorem \ref{fourth}.  

We introduce the compact $m+r$-dimensional manifold
$$ \tilde M \coloneqq M \times (\R/\Z)^r$$
with coordinates $(x, t_1,\dots,t_r)$ with $x \in M$ and $t_1,\dots,t_r \in \R/\Z$.  There is an obvious projection map $\Pi \colon \tilde M \to M$, defined by
$$ \Pi(x, t_1,\dots,t_R) \coloneqq x.$$
We now define the linear map $\tilde U \colon \R^n \to \Gamma(T\tilde M)$, the linear map $\tilde V \colon \R^n \to \Gamma(T^* \tilde M)$, the symmetric bilinear map $\tilde P \colon \R^n \times \R^n \to \in C^\infty(\tilde M)$, and the volume form $\widetilde{\mathrm{vol}}^{m+r} \in \Gamma(\bigwedge^{m+r} T^* \tilde M)$ by the formulae
\begin{align}
\tilde U(y) &\coloneqq \Pi^* U(y) + \sum_{i=1}^r (\Pi^* F(y,e_i)) \frac{d}{dt_i} \nonumber\\
\tilde V(y) &\coloneqq \sum_{i=1}^r (\Pi^* F(y,e_i)) dt_i \nonumber\\
\tilde P(y,y') &\coloneqq \frac{1}{2} \sum_{i=1}^r \Pi^*(F(y,e_i) F(y',e_i)) \label{p-def}\\
\widetilde{\mathrm{vol}}^{m+R} &\coloneqq \Pi^* \mathrm{vol}^m \wedge dt_1 \wedge \dots \wedge dt_R\nonumber
\end{align}
for $y,y' \in \R^n$, where $e_1,\dots,e_r$ is the standard basis for $\R^r$.
Direct calculation using \eqref{h4} yields the equations
$$
\tilde V(y)(\tilde U(y')) = \Pi^* \left(\sum_{i=1}^r F(y,e_i) F(y',e_i)\right)
$$ 
and
$$
\mathrm{div}_{\widetilde{\mathrm{vol}}^{m+R}} \tilde U(y) = 0$$
for $y, y' \in \R^n$; in particular, the Gram bilinear form $(y,y') \mapsto \tilde V(y)(x)(\tilde U(y')(x))$ is symmetric and strictly positive definite for every $x \in M$, since by hypothesis one cannot have $F(y,e_i)(x)$ vanish for all $i=1,\dots,R$ if $y$ is non-zero.  We may also compute the exterior derivative of $\tilde V(y)$ as
$$
d \tilde V(y) = \sum_{i=1}^r \Pi^*(dF(y,e_i)) \wedge dt_i  
$$
and hence
$$
\tilde U(y) \invneg d \tilde V(y) = \sum_{i=1}^R \Pi^*\left(dF(y,e_i)(U(y))\right) dt_r - \Pi^*( F(y,e_i) dF(y,e_i) );
$$
using \eqref{h1}, \eqref{p-def} we conclude that
$$
\tilde V(B(y,y)) + \tilde U(y) \invneg d\tilde V(y) = 
- d \tilde P(y,y)
$$
and Theorem \ref{third} follows.  

It remains to establish Theorem \ref{fourth}.  This will be the objective of the next section of the paper.

\section{An exact solution}

The system \eqref{h1}, \eqref{h4} appears to be rather overdetermined when $n$ is large; in coordinates, one is asking to solve on the order of $n^2 r$ equations, but one only has about $nr$ independent scalar functions.  Remarkably, though, there is still a non-trivial solution to this system that can be described exactly; this solution evades the overdeterminacy by being highly symmetric.  

We first make a simple observation.  Let $\mathfrak{so}(n)$ denote the space of skew-adjoint maps $q \colon \R^n \to \R^n$; this is of course the Lie algebra of the compact Lie group $SO(n)$ of special orthogonal transformations $Q \colon \R^n \to \R^n$, which is connected and orientable and thus considered a compact manifold in our notation.  We can relate this Lie algebra $\mathfrak{so}(n)$ to the cancellation condition \eqref{yo}:

\begin{lemma}\label{stax}  Let $B \colon \R^n \times \R^n \to \R^n$ be a symmetric bilinear form.  Then $B$ obeys \eqref{yo} if and only if there exists a linear map $S \colon \R^n \to \mathfrak{so}(n)$ such that
\begin{equation}\label{byya}
 B(y,y) = S(y)(y)
\end{equation}
for all $y \in \R^n$.
\end{lemma}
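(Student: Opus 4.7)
The plan is to handle the two implications separately. The ``if'' direction is immediate: if $B(y,y) = S(y)(y)$ with $S(y) \in \mathfrak{so}(n)$, then skew-adjointness of $S(y)$ with respect to the Euclidean inner product on $\R^n$ gives $\langle B(y,y), y\rangle = \langle S(y)(y), y\rangle = 0$, which is \eqref{yo}.

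For the substantive ``only if'' direction, I will construct $S$ explicitly from $B$ by polarising the cancellation identity. Introduce the trilinear form $T \colon (\R^n)^3 \to \R$ defined by $T(x,y,z) \coloneqq \langle B(x,y), z\rangle$. Symmetry of $B$ renders $T$ symmetric in its first two slots, and \eqref{yo} reads $T(y,y,y) = 0$ for every $y \in \R^n$. Substituting $y \mapsto x + y + z$ and extracting the fully trilinear cross-term yields the cyclic identity
$$T(x,y,z) + T(y,z,x) + T(z,x,y) = 0,$$
and specialising $x = y$ (together with symmetry in the first two slots) gives the partial polarisation
$$\langle B(y,w), y\rangle \;=\; -\tfrac{1}{2}\,\langle B(y,y), w\rangle \qquad \text{for all } y, w \in \R^n.$$

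With these identities in hand, I will define $S \colon \R^n \to \mathrm{End}(\R^n)$ by prescribing its matrix coefficients
$$\langle S(y) z, w\rangle \;\coloneqq\; \tfrac{2}{3}\bigl(\langle B(y,z), w\rangle - \langle B(y,w), z\rangle\bigr).$$
Three checks then remain. Linearity of $y \mapsto S(y)$ is inherited from bilinearity of $B$. Skew-adjointness $\langle S(y) z, w\rangle = -\langle S(y) w, z\rangle$ is manifest from the antisymmetric right-hand side, placing $S(y)$ in $\mathfrak{so}(n)$. Finally, the target identity $S(y)(y) = B(y,y)$ follows by setting $z = y$ and invoking the partial polarisation above to obtain $\langle S(y)(y), w\rangle = \tfrac{2}{3}\bigl(1 + \tfrac{1}{2}\bigr)\langle B(y,y), w\rangle = \langle B(y,y), w\rangle$ for every $w$.

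The only step demanding any real attention is pinning down the coefficient $2/3$ in the definition of $S$: it is determined (modulo cyclically trivial terms) by imposing the skew-symmetry constraint and the diagonal-value constraint $S(y)(y) = B(y,y)$ on the most general linear-in-$T$ ansatz for $\langle S(y)z, w\rangle$, and the cyclic symmetry of $T$ is precisely what makes the resulting linear system consistent. I do not anticipate any genuine obstacle.
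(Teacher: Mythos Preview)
Your proposal is correct and is essentially the same as the paper's proof: both define $S$ by the identical formula $\langle S(y)z, w\rangle = \tfrac{2}{3}(\langle B(y,z), w\rangle - \langle B(y,w), z\rangle)$ and verify \eqref{byya} using the polarised identity $\langle B(y,y), w\rangle + 2\langle B(y,w), y\rangle = 0$. The only cosmetic difference is that you first derive the full cyclic identity for $T$ and then specialise, whereas the paper obtains the needed two-variable identity directly by substituting $y+tz$ and extracting the linear term.
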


\begin{proof} Clearly, if \eqref{byya} holds then \eqref{yo} holds, thanks to the skew-adjointness of $S(y)$.  Conversely, suppose that $B$ obeys \eqref{yo}.  We define $S \colon \R^n \to \mathfrak{so}(n)$ via duality, setting
\begin{equation}\label{sdef}
\langle S(y_1) y_2, y_3 \rangle_{\R^n} \coloneqq \frac{2}{3} \left( \langle B(y_1, y_2), y_3 \rangle_{\R^n} - \langle B(y_1, y_3), y_2 \rangle_{\R^n} \right)
\end{equation}
for $y_1,y_2,y_3 \in \R^n$.  Clearly $S(y_1)$ is skew-adjoint for any $y_1 \in \R^n$.  For any $y,z \in \R^n$, by applying \eqref{yo} with $y$ replaced by $y+tz$ for $t \in \R$ and extracting the coefficient linear in $t$, we see that
$$\langle B(y,y), z \rangle_{\R^n} + 2 \langle B(y,z), y \rangle_{\R^n} = 0$$
and hence from setting $y_1=y_2=y$ and $y_3=z$ in \eqref{sdef}, we conclude after some algebra that
$$ \langle S(y)(y), z \rangle_{\R^n} = \langle B(y,y), z \rangle_{\R^n}$$
for all $y,z \in \R^n$, and \eqref{byya} follows.
\end{proof}

Now we can prove Theorem \ref{fourth}.  Let $n, B$ be as in that theorem, and let $S \colon \R^n \to \mathfrak{o}(n)$ be the map provided by Lemma \ref{stax}.  We set $M$ to be the special orthogonal group $M \coloneqq SO(n)$ (hence $m \coloneqq \frac{n(n-1)}{2}$), and $\mathrm{vol}^m$ to be a Haar measure on $M$ (it will be irrelevant how one normalises this measure, but one can for instance take the probability Haar measure).  For each $y \in \R^n$, we set $U(y)$ to be the right-invariant vector field on $M$ whose value at any orthogonal transformation $Q \in M$ is given by
$$ U(y) Q = S(y) Q$$
(here we view $M$ as embedded in the vector space $\mathrm{End}(\R^n)$ of $n \times n$ matrices, and the tangent space $T_Q M$ of $M$ at $Q$ as a subspace of that vector space).  Since $S(y)$ lies in the Lie algebra of $O(n)$, the flow along $S(y)$ preserves Haar measure, and hence the Lie derivative of $\mathrm{vol}^m$ along $U(y)$ vanishes; in other words, we have \eqref{h4}.

We set $r \coloneqq n$, and set $F \colon \R^n \times \R^n \to C^\infty(M)$ to be the map
$$ F(y,z)(Q) \coloneqq \langle y, Q z \rangle_{\R^n}.$$
Clearly, if $y$ is non-zero and $Q$ is orthogonal, then $F(y,z)(Q)$ cannot vanish for all $z \in \R^r$.  We have
\begin{align*}
dF(y,z)(U(y)) (Q) &= \langle y, S(y) Q z \rangle_{\R^n} \\
&= -\langle S(y) y, Qz \rangle_{\R^n} \\
&= -\langle B(y,y), Qz \rangle_{\R^n}
\end{align*}
by skew-adjointness of $S(y)$ and \eqref{byya}, and \eqref{h1} follows. This proves Theorem \ref{fourth}.

\begin{remark}  This observation was communicated to the author by Tobias Diez.  One can specialise Theorem \ref{fourth} to the case when the bilinear form $B$ arises from the Euler equation on a compact Lie group $G$ whose associated Lie algebra ${\mathfrak g}$ (which we identify with $\R^n$) is equipped with a scalar product $\langle, \rangle_{\mathfrak g}$.  In this case one has
$$ B(y,y) = \mathrm{ad}^*_y y$$
where $\mathrm{ad}^*_y: \R^n \to \R^n$ is the dual of the adjoint action $\mathrm{ad}_y: \R^n \to \R^n$ with respect to the scalar product; since $\mathrm{ad}_y y = 0$, one has the cancellation condition \eqref{yo} with the indicated scalar product.  In this case, one can modify the above proof of Theorem \ref{fourth} by setting $M \coloneqq G$ with Haar measure, $\R^r \coloneqq \R^n = {\mathfrak g}$, $F(y,z)(g) \coloneqq \langle y, \mathrm{Ad}_g z \rangle_{\mathfrak g}$, and $U(y)$ to be the right-invariant vector field on $G$ associated to $y$, thus $U(y)(g) \coloneqq y g$.  A brief computation analogous to the one above then shows that the conclusions of Theorem \ref{fourth} are obeyed.
\end{remark}


\medskip
Received xxxx 20xx; revised xxxx 20xx.
\medskip


\begin{thebibliography}{99}

\bibitem{arnold}
\newblock V. I. Arnold, 
\newblock {Sur la g\'eometrie diff\'erentielle des groupes de Lie de dimension infinie et ses applications \`a l'hydrodynamique des fluides parfaits}, 
\newblock \emph{Ann. Inst. Fourier} \textbf{16} (1966), 319--361.

\bibitem{tennis}
\newblock M. S. Ashbaugh, C. C. Chicone, and R. H. Cushman, 
\newblock The Twisting Tennis Racket, 
\newblock \emph{J. Dyn. Diff. Eq.} \textbf{3} (1991), 67--85. 

\bibitem{shell1}
\newblock T. Bohr, M. H. Jensen, G. Paladin, and A. Vulpiani, 
\newblock \emph{Dynamical Systems Approach to Turbulence},
\newblock Cambridge University Press, 1998.

\bibitem{brom}
\newblock S. Bromberg and A. Medina, 
\newblock Completeness of homogeneous quadratic vector fields, 
\newblock \emph{Qual. Theory Dyn. Syst.} \textbf{6} (2005), 181--185.

\bibitem{dick}
\newblock R. J. Dickson and L.M. Perko, 
\newblock Bounded quadratic systems in the plane, 
\newblock \emph{J. of Diff. Equs.}, \textbf{7} (1990), 251--273.

\bibitem{ds}
\newblock E. I. Dinaburg and Ya. G. Sinai, 
\newblock A quasilinear approximation for the three-dimensional Navier-Stokes system, 
\newblock \emph{Moscow Math. J.} \textbf{1} (2001), 381--388.

\bibitem{ebin}
\newblock D. Ebin and J. Marsden, 
\newblock Groups of diffeomorphisms and the motion of an incompressible fluid, 
\newblock \emph{Ann. of Math.} (2) \textbf{92} 1970 102--163. 

\bibitem{fp}
\newblock S. Friedlander and N. Pavlovic, 
\newblock Blow-up in a three-dimensional vector model for the Euler equations, 
\newblock \emph{Comm. Pure Appl. Math.} \textbf{57} (2004), 705--725.

\bibitem{shell2}
\newblock U. Frisch, 
\newblock \emph{Turbulence: The Legacy of A. N. Kolmogorov}, 
\newblock Cambridge University Press, 1995.

\bibitem{shell3}
\newblock E. B. Gledzer, 
\newblock System of hydrodynamic type admitting two quadratic integrals of motion, 
\newblock \emph{Sov. Phys. Dokl.}, \textbf{18} (1973), 216--217.

\bibitem{kap}
\newblock J. L. Kaplan and J. A. Yorke, 
\newblock Non associative real algebras and quadratic di↵erential equations, 
\newblock \emph{Nonlinear Analysis} \textbf{3} (1979) no. 1, 49--51.

\bibitem{kp}
\newblock N. H. Katz and N. Pavlovi\'c, 
\newblock Finite time blow-up for a dyadic model of the Euler equations, 
\newblock \emph{Trans. Amer. Math. Soc.} \textbf{357} (2005), no. 2, 695--708. 

\bibitem{shell4}
\newblock K. Okhitani and M. Yamada, 
\newblock Temporal intermittency in the energy cascade process and local Lyapunov analysis in fully developed model of turbulence, 
\newblock \emph{Prog. Theor. Phys.}, \textbf{89} (1989), 329--341.

\bibitem{tao-navier}
\newblock T. Tao, 
\newblock Finite time blowup for an averaged three-dimensional Navier-Stokes equation, 
\newblock \emph{J. Amer. Math. Soc.} \textbf{29} (2016), no. 3, 601--674.

\bibitem{tao-univ}
\newblock T. Tao, 
\newblock On the universality of potential well dynamics,
\newblock preprint.



\end{thebibliography}
\end{document}